\newtheorem{theorem}{Theorem}[section]
\newtheorem{lemma}[theorem]{Lemma}
\theoremstyle{remark}
\begin{document}

\title{A non-symmetric divide-and-conquer\\ recursive formula for the convolution\\ of polynomials and power series}
\date{November 2019}
\author[$\dagger$]{Thomas Baruchel}
\affil[$\dagger$]{\small Éducation nationale, France\authorcr\texttt{baruchel@riseup.net}}
\maketitle

\begin{abstract}
    \noindent Some changes in a recent convolution formula are performed here in order to clean it up by using more conventional notations and by making use of more referrenced and documented components (namely Sierpi\'nski's polynomials, the Thue-Morse sequence, the binomial modulo~2 transform and its inverse). Several variants are published here, by reading afterwards summed coefficients in another order; the last formula is then turned back from a summation to a new divide-and-conquer recursive formula.
\end{abstract}

\section{Introduction}
In a recently published paper, a new convolution formula was written down by tracking all terms along a recursion tree built from a variant of Karatsuba's well-known algorithm~\cite{baruchel}. While several variants of the formula were already given there, some unusual notations were heavily used in order to ``pack'' the required terms into a single summation. Despite the conciseness of these formulas, one could thus argue that they may be too complicated to stand as an inspiring starting point for ensuing researches.

Small changes in one of them can however lead to another more explicit variant, by noticing that three different arbitrary symbols are actually related to Sierpi\'nski's polynomials. The coefficients of these polynomials are those from the well-documented Sierpi\'nski triangle, and it may be expected that publishing a new simpler formula relying on such polynomials for something as significant as multiplicating two polynomials (or convoluting two power series) could have some benefits. In Sections~2 to Section~4, some elementary properties of Sierpi\'nski's polynomials are used in order to prove two new variants as Theorems~\ref{theorem1} and~\ref{theorem2}, one for polynomials and one for infinite power series.

Rewriting now Theorem~\ref{theorem2} by summing all embedded coefficients ``vertically'' rather than ``horizontally'' in Section~\ref{binomial} allows to take benefit of a little-studied transform called the ``binomial modulo~2 transform'' (and its inverse) in order to achieve a still more compact variant. The resulting formula in theorem~\ref{theorem4} being, like all these variants, a summation, it is then turned into to a new \emph{divide-and-conquer} recursive formula in section~\ref{recursive}.

This final recursive formula is said to be \emph{non-symmetric} because both convolved power series are not handled in the same way, one being handled by performing subtractions and the other by performing additions.

\section{Sierpi\'nski's polynomials}

The Sierpi\'nski triangle is best known as a graphical figure (see below); it is a fractal object built by adding at each iteration two new copies of the same whole object at its own bottom (and scaling down the whole figure in order to keep its original size). Thus iterating over its rows from top to bottom is possible: the $n$th~row (counting from the top) is always the same whatever the number of previous iterations is.
\begin{figure}[!ht]
    \begin{center}
    \includegraphics[scale=.25]{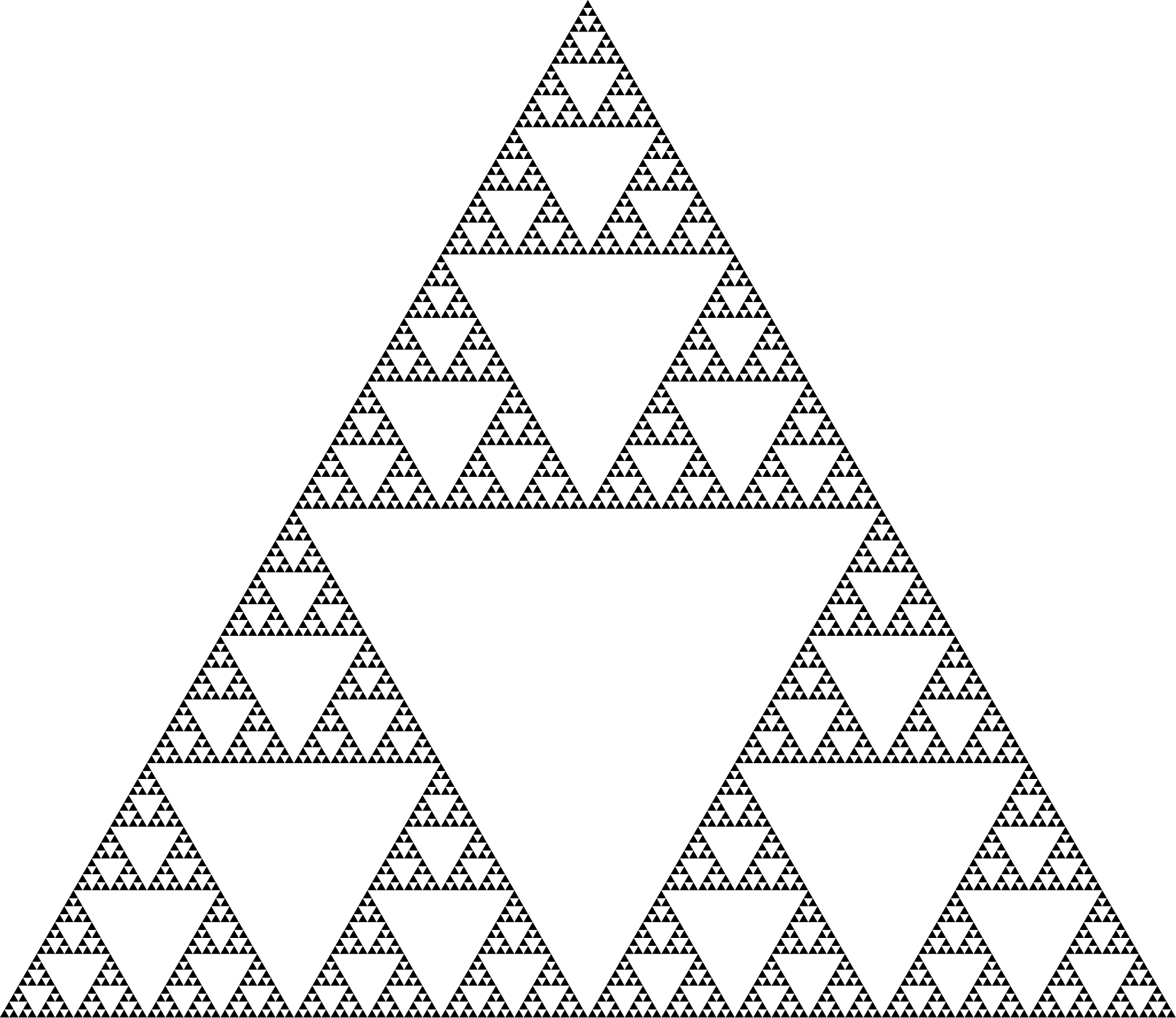}
\end{center}
\end{figure}
Iterating over the rows of the triangle and reading them as finite sequences of binary digits ($0$ for  ``white'' and $1$ for ``black'') gives another mathematical object which is also refferred to as the Sierpi\'nski triangle but now in some combinatorial context. Such coefficients are those from Pascal's triangle \emph{modulo}~$2$.

The sequential rows may also be read as polynomials by taking the previously described~$0$ and~$1$ as coefficients, resulting in the sequence~$S$ of Sierpi\'nski's polynomials defined among the comments of the sequence \texttt{A047999} in the \textit{On-Line Encyclopedia of Integer Sequences} \cite{oeis}. The initial polynomials are:
\[
    1,\, 1+x,\, 1+x^2,\, 1+x+x^2+x^3,\, \dots
\]

The power series expanding $(1-x)^{-1}S_k^{-1}$ will be referred to as~$\delta_k$ in the current paper (the symbol~$\delta$ being chosen because~$\delta_k$ happens to be the $k$th~main \emph{diagonal} in the Sierpi\'nski triangle).

All formulas below rely on the \emph{termwise} product of sequences (or power series), which will be denotated here as~$f\odot g$ (meaning that coefficients of the same rank are multiplicated with no convolution).

\begin{lemma}\label{lemma1}
    Let $m=(d_{n-1}\dots d_2 d_1 d_0)_2$ some nonnegative integer between~$0$ and~$2^n-1$ having the finite sequence~$d$ as the digits of its binary encoding; then
    \[
        S_m = \left(1+x\right)^{d_0}
              \left(1+x^2\right)^{d_1}
              \left(1+x^4\right)^{d_2}
              \,\dots\,
              \left(1+x^{2^{n-1}}\right)^{d_{n-1}}
              \,\textrm{.}
    \]
\end{lemma}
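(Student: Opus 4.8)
The plan is to recognize that, by the very definition recalled in the excerpt, the Sierpi\'nski polynomial $S_m$ is nothing more than the $m$th row of Pascal's triangle read modulo~$2$; equivalently, $S_m$ is the unique polynomial with coefficients in $\{0,1\}$ satisfying
\[
    S_m \equiv (1+x)^m \pmod 2
\]
coefficientwise. With this reformulation the lemma becomes a purely computational statement about $(1+x)^m$ modulo~$2$, where the Frobenius endomorphism of characteristic~$2$ is available to do the work.

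First I would record the ``freshman's dream'' identity: for every $i\ge 0$,
\[
    (1+x)^{2^i} \equiv 1 + x^{2^i} \pmod 2 \textrm{,}
\]
which follows by an immediate induction on~$i$ starting from $(1+x)^2 = 1 + 2x + x^2 \equiv 1 + x^2$. Then, using the binary expansion $m=\sum_{i=0}^{n-1} d_i 2^i$, I would split the power as
\[
    (1+x)^m = \prod_{i=0}^{n-1}\bigl((1+x)^{2^i}\bigr)^{d_i}
            \equiv \prod_{i=0}^{n-1}\bigl(1+x^{2^i}\bigr)^{d_i} \pmod 2 \textrm{,}
\]
so that the right-hand side of the lemma is already congruent to $S_m$ modulo~$2$.

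The hard part will be upgrading this congruence to an honest \emph{equality} of polynomials with $\{0,1\}$ coefficients. Since each $d_i\in\{0,1\}$, the product equals $\prod_{i\in I}(1+x^{2^i})$ with $I=\{i : d_i=1\}$, and expanding gives $\sum_{J\subseteq I} x^{\sum_{i\in J} 2^i}$. By uniqueness of binary representation the exponents $\sum_{i\in J}2^i$ are pairwise distinct as $J$ ranges over subsets of~$I$; hence no two monomials collide, every coefficient of the product already lies in $\{0,1\}$, and no reduction modulo~$2$ is actually performed. A polynomial with coefficients in $\{0,1\}$ is determined by its residues modulo~$2$, so the congruence forces the claimed identity over~$\mathbb{Z}$. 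This collision-free expansion is the one point demanding genuine care, as it is precisely what guarantees that the product \emph{is} $S_m$ rather than merely congruent to it.

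As an alternative I would mention the combinatorial route: Lucas' theorem states that $\binom{m}{k}$ is odd exactly when the binary digits of~$k$ form a submask of those of~$m$, which identifies the odd-coefficient positions of $S_m$ with the same subset sums appearing in the expanded product. This recovers the statement directly, but the Frobenius argument keeps everything at the level of polynomial identities and is more self-contained, so I would present it as the main proof.
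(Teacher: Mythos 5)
Your proof is correct, but it takes a genuinely different route from the paper. The paper argues by induction on the length of the binary encoding, using the defining recurrence of the Sierpi\'nski polynomials quoted from \texttt{A047999}, namely $S_{2n}(x)=S_n(x^2)$ and $S_{2n+1}(x)=(x+1)S_n(x^2)$: left-shifting the binary digits of $n$ corresponds exactly to the substitution $x\mapsto x^2$, which turns each factor $\left(1+x^{2^j}\right)$ into $\left(1+x^{2^{j+1}}\right)$, and the odd case appends the factor $(1+x)$. Your argument instead starts from the characterization $S_m\equiv(1+x)^m \pmod 2$ (which the paper does license when it identifies the triangle's rows with Pascal's triangle modulo~$2$), splits the power along the binary expansion via the Frobenius identity $(1+x)^{2^i}\equiv 1+x^{2^i}$, and then lifts the congruence to an equality over $\mathbb{Z}$ by observing that subset sums of distinct powers of~$2$ never collide, so the product already has $\{0,1\}$ coefficients. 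That lifting step is the one place where a careless ``mod $2$'' argument would be incomplete, and you handled it properly; it is essentially a polynomial form of Lucas' theorem, as your closing remark acknowledges. The trade-off: the paper's induction stays entirely inside the recursive definition of the polynomials and needs no lifting argument at all, while your proof is non-inductive and closed-form, but it rests on the Pascal-modulo-$2$ characterization, whose own justification (from the building rule the paper takes as the definition) would require an induction of the same flavor as the one the paper performs.
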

\begin{proof}
    This is obviously true for~$m=0$ and~$m=1$. Then, we refer to the building rule described as a comment of the sequence~\texttt{A047999}:
    \[
        \left\{
            \begin{array}{l}
            S_{2n+1}\left(x\right) = \left(x+1\right)S_n\left(x^2\right)
            \\[4pt]
            S_{2n}\left(x\right) = S_n\left(x^2\right)
            \end{array}
        \right.
    \]
    in order to prove by induction that if the lemma is true for any~$m$ smaller than some power of~$2$, it is also true for any value of~$m$ smaller than the following power of~$2$. The proof is straightforward since the binary encoding of~$2n$ is known to be the same as the one of~$n$ shifted to the left by one digit (thus performing $d_{j+1}\gets d_j$) while replacing~$x$ by~$x^2$ is the same as replacing
    \[
        \left(1+x^{2^j}\right)
        \quad\textrm{by}\quad
        \left(1+x^{2^{j+1}}\right)
\]
everywhere in the whole product above.
\end{proof}

\begin{lemma}\label{lemma2}
    Let $n$ be some power of~$2$ and $k$ some nonnegative integer smaller than~$n$; then:
    \[
        S_k\times S_{n-1-k} = S_{n-1}\,\textrm{.}
    \]
\end{lemma}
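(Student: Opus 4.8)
The plan is to invoke Lemma~\ref{lemma1} to rewrite each Sierpiński polynomial in product form and then exploit the special binary shape of $n-1$ that results from $n$ being a power of~$2$. First I would set $n=2^N$, so that $n-1=2^N-1=(\underbrace{1\dots1}_{N})_2$ has all $N$ of its low-order binary digits equal to~$1$. Lemma~\ref{lemma1} then gives at once
\[
    S_{n-1}=\prod_{j=0}^{N-1}\left(1+x^{2^j}\right)\,\textrm{.}
\]

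Next I would write $k$ in binary as $k=(d_{N-1}\dots d_1 d_0)_2$; since $0\le k<n=2^N$, the $N$ digits $d_0,\dots,d_{N-1}$ are well defined and each lies in $\{0,1\}$. The key observation is that subtracting $k$ from the all-ones integer $2^N-1$ performs a bitwise complement: the $j$th binary digit of $n-1-k$ is exactly $1-d_j$. Applying Lemma~\ref{lemma1} to both factors then yields
\[
    S_k=\prod_{j=0}^{N-1}\left(1+x^{2^j}\right)^{d_j}
    \qquad\text{and}\qquad
    S_{n-1-k}=\prod_{j=0}^{N-1}\left(1+x^{2^j}\right)^{1-d_j}\,\textrm{.}
\]

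Finally I would multiply the two products factor by factor. Because the two exponents attached to each $\left(1+x^{2^j}\right)$ are $d_j$ and $1-d_j$, they sum to~$1$ for every~$j$, so the product collapses to $\prod_{j=0}^{N-1}\left(1+x^{2^j}\right)$, which is precisely $S_{n-1}$ by the first display. The one point deserving care is the claim that $2^N-1-k$ is genuinely the bitwise complement of~$k$; this is exactly where the hypothesis that $n$ is a power of~$2$ is essential, and I would justify it by noting that since every digit of $n-1$ equals~$1$, the replacement $d_j\mapsto 1-d_j$ happens independently in each bit with no borrows. Everything else is a routine manipulation of the product formula supplied by Lemma~\ref{lemma1}.
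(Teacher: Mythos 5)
Your proposal is correct and follows exactly the paper's own argument: both rest on the observation that $n-1=(11\dots1)_2$ makes $n-1-k$ the bitwise complement of~$k$, so that by Lemma~\ref{lemma1} the factors of $S_k$ and $S_{n-1-k}$ are complementary and multiply to $S_{n-1}$. Your write-up merely spells out the borrow-free subtraction and the exponent bookkeeping ($d_j+(1-d_j)=1$) that the paper leaves implicit.
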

\begin{proof}
    The binary encoding of~$n-1$ is~$(111\dots 111)_2$ since~$n$ is a power of~$2$; thus $n-1-k$ and $k$ have complementary binary encodings. Thus, according to Lemma~\ref{lemma1}, $S_k$ and~$S_{n-1-k}$ have complementary factors in regards to the whole product defined in that lemma, which soon leads to the above statement.
\end{proof}

\section{The Thue-Morse sequence}

Let $\sigma$ be some specific encoding of the Thue-Morse sequence defined as the sequences \texttt{A106400} in the \textit{On-Line Encyclopedia of Integer Sequences} \cite{oeis}, namely:
\[
    1, -1, -1, 1, -1, 1, 1, -1, -1, 1, 1, -1, 1, -1, -1, 1, \dots
\]
Like the Sierpi\'nski triangle, the sequence may be built from a duplicating process: the $2^k$ initial coefficients are copied to their right with their sign being flipped in order to build the initial $2^{k+1}$ coefficients.

The notation $\bar{f}$ will be used here for representing the \emph{termwise} product of $\sigma$ and some polynomial~$f$ (or power series\footnote{The same notation could obviously also be used for a sequence of numbers if needed.}). We can thus write: $\bar{f}=\sigma\odot f$ with the symbol $\odot$ indicating the \emph{termwise} product of two objects.

Let also $\bar{S}$ be specifically the sequence of polynomials such that
    \[
        \bar{S}_m = \left(1-x\right)^{d_0}
              \left(1-x^2\right)^{d_1}
              \left(1-x^4\right)^{d_2}
              \,\dots\,
              \left(1-x^{2^{n-1}}\right)^{d_{n-1}}
    \]
    with $m=(d_{n-1}\dots d_2 d_1 d_0)_2$ some nonnegative integer between~$0$ and~$2^n-1$. This notation is very slightly different from the~$\bar{f}$ one since~$\bar{S}$ is a sequence of polynomials while~$f$ is a polynomial (or a power series), but it is easy to show that $\bar{S}_m=\overline{(S_m)}=\sigma\odot S_m$ by noticing that a coefficient in~$\bar{S}_m$ will be $-1$ if and only if an odd number of digits occurs in the binary encoding described above, which exactly matches the corresponding term in~$\sigma$ since $\sigma_n=(-1)^{\mathcal{H}_n}$ (with $\mathcal{H}_n$ being the Hamming weight of~$n$).

\begin{lemma}\label{lemma3}
    Let $n$ be some power of~$2$ and $k$ some nonnegative integer smaller than $n$; let also~$f$be some polynomials in the indeterminate~$x$; then:
    \[
        S_{n-1-k} \,x^k \,\odot\, \bar{S}_k \,f
        \,=\, \overline{S_{n-1-k} \,x^k \,\odot\, S_k\,\bar{f}}
        \,\textrm{.}
    \]
\end{lemma}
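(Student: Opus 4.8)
The plan is to compare the two sides coefficient by coefficient in $x$, exploiting that both the termwise product $\odot$ and the bar operation $\overline{\,\cdot\,}=\sigma\odot(\cdot)$ act \emph{diagonally} on coefficients. Writing $A=S_{n-1-k}\,x^k$, whose coefficients are all $0$ or $1$ by Lemma~\ref{lemma1}, the factor $A$ serves on both sides as a $0/1$ mask: the coefficient of $x^i$ vanishes on both sides unless $[x^{i-k}]S_{n-1-k}=1$. It therefore suffices to fix an index $i$ with $[x^{i-k}]S_{n-1-k}=1$ and to establish the scalar identity $(\bar S_k\,f)_i=\sigma_i\,(S_k\,\bar f)_i$ between the $x^i$-coefficients of the two unmasked ordinary products.

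By linearity in $f$ I would reduce to the case of a monomial $f=x^m$. Expanding $\bar S_k=\sigma\odot S_k$ and $\bar f=\sigma\odot f$ in the two ordinary products, only the single index $j=i-m$ survives, and the required equality collapses to
\[ \sigma_j\,(S_k)_j \;=\; \sigma_i\,\sigma_{i-j}\,(S_k)_j . \]
This is automatic when $(S_k)_j=0$, so the whole lemma reduces to proving the sign identity
\[ \sigma_j=\sigma_i\,\sigma_{i-j}, \qquad\text{equivalently}\qquad \mathcal{H}_j\equiv\mathcal{H}_i+\mathcal{H}_{i-j}\pmod 2, \]
for every $j$ with $(S_k)_j=1$ and every $i$ with $[x^{i-k}]S_{n-1-k}=1$, using $\sigma_n=(-1)^{\mathcal{H}_n}$.

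The arithmetic core, which I expect to be the main obstacle, is a carry-free bookkeeping of binary digits. By Lemma~\ref{lemma1}, $(S_k)_j=1$ holds exactly when the binary support of $j$ lies inside that of $k$, and $[x^{i-k}]S_{n-1-k}=1$ exactly when the support of $\ell:=i-k$ lies inside that of $n-1-k$. Since $n$ is a power of $2$, the expansion of $n-1$ is all ones, so (as already used for Lemma~\ref{lemma2}) the supports of $k$ and of $n-1-k$ are complementary; call them $K$ and $\bar K$. Thus $\mathrm{supp}(j)\subseteq K$ and $\mathrm{supp}(\ell)\subseteq\bar K$ are disjoint, so that $i=\ell+k$ and $i-j=\ell+(k-j)$ are both carry-free additions. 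Consequently $\mathcal{H}_i=\mathcal{H}_\ell+\mathcal{H}_k$ and $\mathcal{H}_{i-j}=\mathcal{H}_\ell+\mathcal{H}_k-\mathcal{H}_j$, whence $\mathcal{H}_i+\mathcal{H}_{i-j}=2(\mathcal{H}_\ell+\mathcal{H}_k)-\mathcal{H}_j\equiv\mathcal{H}_j\pmod 2$, which is exactly the sign identity. The delicate point is simply to justify that the subtraction $k-j$ removes bits without borrowing (legitimate precisely because $\mathrm{supp}(j)\subseteq K=\mathrm{supp}(k)$) and that no carry ever couples $K$ with $\bar K$; once this digit accounting is set up, the Hamming-weight additivity is immediate and the lemma follows.
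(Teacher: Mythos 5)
Your proof is correct, but it takes a genuinely different route from the paper. The paper proves Lemma~\ref{lemma3} by induction on the binary digits of~$k$: assuming the identity for~$k$, it passes to $k+2^j$ by multiplying $\bar{S}_k$ by $(1-x^{2^j})$ on the left side and $S_k$ by $(1+x^{2^j})$ on the right side, and argues verbally that both operations amount to the same manipulation of blocks of $2^{j+1}$ coefficients (subtracting one half from the other, versus flipping signs according to the Thue--Morse building rule, adding, and flipping back). Your argument instead verifies the identity coefficient by coefficient: reducing by linearity to $f=x^m$, the lemma collapses to the sign identity $\sigma_j=\sigma_i\sigma_{i-j}$ whenever $\mathrm{supp}(j)\subseteq\mathrm{supp}(k)$ and $i=k+\ell$ with $\mathrm{supp}(\ell)\subseteq\mathrm{supp}(n-1-k)$; the complementarity of the two supports (the same fact used for Lemma~\ref{lemma2}) makes every addition and the subtraction $k-j$ carry-free, so Hamming weights add and the signs agree. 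In effect you isolate the underlying arithmetic fact that $\sigma$ is multiplicative over carry-free addition ($\sigma_{a+b}=\sigma_a\sigma_b$ when $a$ and $b$ have disjoint binary supports), with the mask $S_{n-1-k}\,x^k$ being precisely what forces~$i$ into that regime. What your route buys is rigor and transparency: the paper's induction step is only sketched, with no precise bookkeeping of which coefficients are ``subtracted'', ``cancelled'' or ``flipped'', whereas every step of yours is a checkable identity. What the paper's route buys is structural alignment: its factor-by-factor induction mirrors the relation $S_{k+2^j}=(1+x^{2^j})S_k$ that later drives the divide-and-conquer recursion of Section~\ref{recursive}, so it reads more naturally as a statement about the algorithm, at the cost of leaving the combinatorial core implicit.
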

\begin{proof}
This is true for~$k=0$ and we want to prove by induction that when the identity is true for some~$k$ we can also write:
    \[
        S_{n-1-k-2^j} \,x^{k+2j} \,\odot\, \left(1-x^{2^j}\right)\bar{S}_k \,f
        \,=\, \overline{S_{n-1-k-2^j} \,x^{k+2^j}
        \,\odot\, \left(1+x^{2^j}\right)\overline{S_k\,\bar{f}}}
    \]
with~$2^j$ some power of~$2$ not already present in the binary expansion of~$k$.

The left-hand side of the previous equation means that we want to take consecutive blocks of~$2^{j+1}$ coefficients; subtract the initial~$2^j$ ones to the following~$2^j$ ones; and finally cancel the second half of such blocks (cancelling half of each block being performed by removing one more factor from the \emph{mask} $S_{n-1-k}$).

The right-hand side of the same equation performs the very same thing in another way: we flip the sign of coefficients in such a way that in consecutive blocks of~$2^{j+1}$ coefficients, the~$2^j$ initial ones will be flipped in an opposite manner than in the following~$2^j$ ones (this comes from the building rule of the Thue-Morse sequence); then we add (rather than subtract) the two parts; then we flip back the signs of the coefficients to their initial state.
\end{proof}

\section{New variants of the convolution formulas}

The general idea of a previous paper was to study a variant of Karatsuba's algorithm and ``flatten'' the recursion tree into a summation formula (see~\cite{baruchel}). Before going further, the latter is rewritten with more expressive notations in order to help manipulating it.

\begin{theorem}\label{theorem1}
    Let $f$ and $g$ two polynomials of degree~$n-1$ with $n$ some power of~$2$ in the same indeterminate~$x$; then
\[
    \begin{array}{lcl}
    f\times g
    &=&\displaystyle
    \sum_{k=0}^{n-1}
    \sigma_k \, S_{n-1-k}\left( S_{n-1-k}\, x^k \,\odot\, \bar{S}_k\, f \,\odot\, \bar{S}_k\, g \right)
    \\[16pt]
    &=&\displaystyle
    \sum_{k=0}^{n-1}
    \sigma_k \, S_{n-1-k}\left( S_{n-1-k}\, x^k \,\odot\, S_k\, \bar{f} \,\odot\, S_k\, \bar{g} \right) \,\textrm{.}
\end{array}
\]
\end{theorem}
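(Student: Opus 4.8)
The plan is to treat the two displayed identities separately. The first line is in essence the convolution formula already established in \cite{baruchel}, now re-expressed through Sierpi\'nski's polynomials and the Thue--Morse signs, so the work there is to verify that the compact notation reproduces exactly the earlier terms. The second line will then be deduced from the first by a single application of Lemma~\ref{lemma3}.

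For the first line I would begin by recalling the packed summation of \cite{baruchel} and checking that each of its three ``arbitrary'' symbols is one of the objects introduced above. Lemma~\ref{lemma1} is precisely what identifies the $0$/$1$ masks produced along the recursion tree with the Sierpi\'nski polynomials $S_{n-1-k}$ and $S_k$ (and, after flipping signs according to the Hamming weight, with $\bar S_k$), while the scalar $\sigma_k=(-1)^{\mathcal H_k}$ records the additions-versus-subtractions bookkeeping of that tree. Lemma~\ref{lemma2}, which gives $S_k\,S_{n-1-k}=S_{n-1}$, is the identity that lets the outer ``mask'' $S_{n-1-k}$ combine correctly with the $S_k$ underlying $\bar S_k$, so that the total degree comes out as $2n-2$ and the extremal coefficients match those of $f\times g$. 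Alternatively, one can prove the first line self-containedly by induction on the power of two $n$: the case $n=1$ reduces to $f_0 g_0$, and the inductive step would use the doubling rules $S_{2k}(x)=S_k(x^2)$ and $S_{2k+1}(x)=(1+x)S_k(x^2)$ together with the duplication rule of $\sigma$ to split the sum over $k$ into its even and odd parts and match them against the two recursive half-products.

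For the second line I would argue termwise. Writing $M=S_{n-1-k}\,x^k$ for the mask, Lemma~\ref{lemma3} reads $M\odot \bar S_k f=\sigma\odot(M\odot S_k\bar f)$, and the same holds with $g$ in place of $f$. At any index $i$ with $M_i=0$ both threefold products $M\odot \bar S_k f\odot \bar S_k g$ and $M\odot S_k\bar f\odot S_k\bar g$ vanish, whereas at an index with $M_i=1$ Lemma~\ref{lemma3} gives $(\bar S_k f)_i=\sigma_i(S_k\bar f)_i$ and $(\bar S_k g)_i=\sigma_i(S_k\bar g)_i$, and their product erases the two sign factors since $\sigma_i^2=1$. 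The crucial observation is that $M$ has only $0$ and $1$ as coefficients, so $M_i^2=M_i$ and no stray sign survives; hence the two threefold Hadamard products coincide, and multiplying by $\sigma_k S_{n-1-k}$ and summing over $k$ converts the first line into the second.

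The main obstacle is the first identity, not the second. Showing that the summation genuinely reproduces $f\times g$ is the content carried over from \cite{baruchel}, and the delicate part is the faithful translation of its compressed notation into the present language: one must confirm that the recursion-tree indices line up with the binary expansions governing Lemma~\ref{lemma1} and that the sign attached to each leaf is exactly $\sigma_k$. By contrast, once Lemma~\ref{lemma3} is available the equivalence of the two lines is a short and purely local computation.
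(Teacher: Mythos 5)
Your proposal is correct and follows essentially the same route as the paper: the first line is obtained by translating the packed formula~(8) of~\cite{baruchel} into the present notation, identifying its symbols with $S_{n-1-k}$ and $\bar{S}_k$ via Lemmas~\ref{lemma1} and~\ref{lemma2}, and the second line follows from Lemma~\ref{lemma3} together with the cancellation of the flipped signs in the termwise product. Your explicit termwise check (using $\sigma_i^2=1$ and the fact that the mask $S_{n-1-k}\,x^k$ has only $0$/$1$ coefficients) is in fact a slightly more careful rendering of the paper's one-sentence cancellation remark.
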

\begin{proof}
We copy the the formula~(8) from~\cite{baruchel} as it is typeset in the original paper despite some differences in used notations; it will be translated to the current notations afterwards:
\[
    A\times B\, = \,
    \sum_{k=0}^{n-1}
    \bar{f}_k\left(\sigma_k \bar{f}_k X^k \odot \dot{f}_k A\odot\dot{f}_k B\right)
    \,\textrm{.}
\]
In the previous formula, $\dot{f}_k$ means exactly the same thing as~$\bar{S}_k$ in the current paper, while~$\bar{f}_k$ can be recognized here as~$S_{n-1-k}$ with the help of Lemma~\ref{lemma2}. Thus, the formula can now be translated as:
\[
    f\times g\, = \,
    \sum_{k=0}^{n-1}
    S_{n-1-k}\left(\sigma_k S_{n-1-k} x^k \odot \bar{S}_k f\odot \bar{S}_k g\right)
    \,\textrm{.}
\]
According to Lemma~\ref{lemma3}, $\bar{S}_k f$ and $\bar{S}_k g$ can be replaced above by $S_k\bar{f}$ and $S_k\bar{g}$ since all flipped signs will cancel themselves when evaluating the termwise product $S_k\bar{f}\odot S_k\bar{g}$, leading to the stated formula.
\end{proof}

\begin{theorem}\label{theorem2}
    Let $f$ and $g$ be two power series in the same indeterminate~$x$; then
\[
    \begin{array}{lcl}
    f\ast g
    &=&\displaystyle
    \sum_{k=0}^{\infty}
    \displaystyle\sigma_k \delta_k \left( x^k \delta_k \,\odot\, \bar{S}_k\, f \,\odot\, \bar{S}_k\, g \right)
    \\[16pt]
    &=&\displaystyle
    \sum_{k=0}^{\infty}
    \displaystyle\sigma_k \delta_k\left( x^k \delta_k \,\odot\, S_k\, \bar{f} \,\odot\, S_k\, \bar{g} \right)   \,\textrm{.}
\end{array}
\]
\end{theorem}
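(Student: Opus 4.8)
The plan is to obtain Theorem~\ref{theorem2} as a limit of the polynomial identity of Theorem~\ref{theorem1}. Given two power series~$f$ and~$g$, I would fix a power of two~$n$ and let~$f_n$ and~$g_n$ be their truncations modulo~$x^n$, which are polynomials of degree at most~$n-1$. Theorem~\ref{theorem1} applied to~$f_n$ and~$g_n$ gives the exact identity
\[
    f_n\times g_n
    =\sum_{k=0}^{n-1}
    \sigma_k\,S_{n-1-k}\left(S_{n-1-k}\,x^k\,\odot\,\bar{S}_k\,f_n\,\odot\,\bar{S}_k\,g_n\right)\,\textrm{,}
\]
and the strategy is to let~$n\to\infty$: the left-hand side converges coefficientwise to~$f\ast g$, since the coefficient of~$x^d$ in~$f_n\times g_n$ already equals that of~$f\ast g$ as soon as~$n>d$, while the right-hand side should converge to the first summation in the statement.

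The key algebraic observation is that the polynomial mask~$S_{n-1-k}$ is simply a truncation of~$\delta_k$. Since~$n$ is a power of two, $S_{n-1}$ is the all-ones row~$1+x+\dots+x^{n-1}=(1-x^n)(1-x)^{-1}$; together with Lemma~\ref{lemma2}, which states~$S_k\,S_{n-1-k}=S_{n-1}$, and the definition~$\delta_k=(1-x)^{-1}S_k^{-1}$, this yields
\[
    S_{n-1-k}=S_{n-1}\,S_k^{-1}=(1-x^n)\,(1-x)^{-1}S_k^{-1}=(1-x^n)\,\delta_k\,\textrm{.}
\]
Hence the coefficients of~$S_{n-1-k}$ and of~$\delta_k$ coincide in every degree below~$n$, so replacing the finite mask by the infinite diagonal introduces no error in the low-degree coefficients.

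Armed with this, I would proceed coefficientwise. Fix an output degree~$d$. The factor~$x^k$ inside the termwise product forces the~$k$th summand to be supported on degrees at least~$k$, so only the finitely many terms with~$k\le d$ contribute to degree~$d$, in both the finite and the infinite sum. For each such~$k$ and for~$n>d$, every ingredient entering the coefficient of~$x^d$ — the outer mask~$S_{n-1-k}$, the shifted inner mask~$S_{n-1-k}\,x^k$, and the products~$\bar{S}_k f_n$ and~$\bar{S}_k g_n$ — agrees up to degree~$d$ with its power series counterpart~$\delta_k$, $x^k\delta_k$, $\bar{S}_k f$ and~$\bar{S}_k g$, by the identity above and by~$f_n\equiv f$, $g_n\equiv g$ modulo~$x^n$. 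Because each coefficient of a convolution depends only on coefficients of bounded degree, the coefficient of~$x^d$ in the finite formula stabilises to the corresponding coefficient of the first summation in the statement, which proves the first equality.

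The second equality then follows from Lemma~\ref{lemma3} exactly as in the proof of Theorem~\ref{theorem1}: writing~$\bar{S}_k f=\overline{S_k\bar{f}}$ and likewise for~$g$, the two overall sign flips cancel inside the termwise product~$S_k\bar{f}\odot S_k\bar{g}$. The main obstacle I anticipate is not any individual computation but the bookkeeping of the limit, namely justifying the interchange of~$n\to\infty$ with the summation and with the two nested products. The explicit truncation identity~$S_{n-1-k}=(1-x^n)\delta_k$ is what tames this: it converts the convergence of the masks into an exact agreement of low-degree coefficients, so the stabilisation argument replaces any genuine limiting estimate.
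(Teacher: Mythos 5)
Your proof is correct, but it takes a genuinely different route from the paper's. The paper's proof of Theorem~\ref{theorem2} does not pass through Theorem~\ref{theorem1} at all: it imports formula~(9) of~\cite{baruchel}, which is already a power-series identity, and merely translates its notation, observing that since $(1-x)^{-1}$ expands as the infinite product $(1+x)(1+x^2)(1+x^4)\cdots$, dividing out the factors constituting $S_k$ shows that $\delta_k=(1-x)^{-1}S_k^{-1}$ is the correct infinite analogue of the complementary mask $S_{n-1-k}$ that Lemma~\ref{lemma2} provides in the finite case. You instead deduce the power-series statement from the polynomial statement of Theorem~\ref{theorem1}, applied to the truncations $f_n$ and $g_n$, via a coefficientwise stabilisation argument. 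Your identity $S_{n-1-k}=(1-x^n)\,\delta_k$ (an immediate consequence of Lemma~\ref{lemma2}, of $S_{n-1}=(1-x^n)(1-x)^{-1}$, and of the definition of $\delta_k$) is the right tool: it turns the passage to the limit into an exact agreement of all coefficients of degree below $n$, and the factor $x^k$ inside the termwise product ensures that only summands with $k\le d$ contribute to degree $d$, so interchanging the limit with the summation is legitimate; the second equality then transfers from the finite case just as you say, since the sign flips cancel termwise ($\sigma_m^2=1$) under a $0$--$1$ mask. What your route buys is self-containedness relative to this paper — it needs only Theorem~\ref{theorem1} (hence only formula~(8) of~\cite{baruchel}) plus formal-power-series bookkeeping, and it makes explicit the convergence considerations that the paper's two-sentence proof leaves implicit. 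What the paper's route buys is brevity, keeping the finite and infinite theorems as parallel translations of the two corresponding formulas of the earlier paper. One small point to polish in your write-up: Theorem~\ref{theorem1} is stated for polynomials of degree $n-1$, whereas your truncations may have smaller degree; this is harmless (pad with zero leading coefficients, or note that the identity is polynomial in the coefficients), but it deserves a word.
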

\begin{proof}
Formula~(9) from~\cite{baruchel} is now taken into account. We can not refer here to Lemma~\ref{lemma2} any longer for building some complementary polynomial because we do not work on a finite number~$n$ of terms, but the theory of generating function is useful for building the relevant power series, since
\[
    \frac{1}{1-x}\quad\textrm{expands to}\quad
              \left(1+x\right)
              \left(1+x^2\right)
              \left(1+x^4\right)
              \left(1+x^8\right)
              \,\dots\,
\]
where the required factors can easily be cancelled by a simple division: the previously defined~$\delta_k$ term matches the required infinite product.
\end{proof}

\section{The binomial modulo 2 inverse transform}\label{binomial}

In a comment of \texttt{A100735} in the \textit{On-Line Encyclopedia of Integer Sequences}~\cite{oeis} are defined the ``modulo~2 binomial transform'' and its inverse. In the current paper, the name of this transform is slightly changed to ``binomial modulo~2 transform''. Both transforms are defined as:
\[
    \begin{array}{lcl@{\qquad}l}
        B_n&=&\displaystyle\sum_{k=0}^n T_{n,k} A_k&\textit{(transform)} \\[14pt]
        A_n&=&\displaystyle\sum_{k=0}^n \sigma_{n-k} T_{n,k} B_k&\textit{(inverse transform)}
\end{array}
\]
for a given sequence~$A$, where $\sigma$ is the signed Thue-Morse sequence and $T_{n,k}$ is the relevant coefficient in the Sierpi\'nski triangle. The notations~$B=A'$ and~$A=B^*$ will be used from now on; the same notations will be applied to power series as well.

\begin{theorem}\label{theorem3}
    The power series $f$ being defined as~$f= a_0 + a_1 x + a_2 x^2 + \dots$, we define also $f_k=a_k + a_{k+1}x + a_{k+2}x^2 + \dots$, and~$f_k^*$ is the power series defined from the binomial modulo 2 inverse transform of the sequence~$a_k, a_{k+1}, a_{k+2},\dots$ (while $f'$ is the binomial modulo~2 transform of~$f$). Then for two power series~$f$ and~$g$ in the same indeterminate~$x$,
\[
    f\ast g =
    \sum_{k=0}^{\infty}
          \left(
              \,\overline{ \delta_k} \odot f_k^* \odot g_k^*
          \right)'
      \, x^k
      \,\textrm{.}
\]
\end{theorem}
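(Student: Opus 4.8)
The plan is to derive Theorem~\ref{theorem3} from the second identity of Theorem~\ref{theorem2} by re-reading its doubly-indexed array of coefficients ``vertically,'' so that the inner sums reassemble into the two transforms. The engine is a dictionary between the \emph{columns} of the Sierpi\'nski triangle and the transforms. First I would record that, since $\delta_k$ is the $k$th diagonal, $[x^i]\delta_k=T_{k+i,i}$; using the symmetry $T_{k+i,i}=T_{k+i,k}$ this gives $[x^n](x^k\delta_k)=T_{n,k}$, so that $x^k\delta_k=\sum_{n\ge k}T_{n,k}x^n$ is the generating function of column~$k$, and likewise $x^k\overline{\delta_k}=\sum_{n\ge k}\sigma_{n-k}T_{n,k}x^n$ is the signed column. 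Summing the defining relations of the transform and its inverse over~$n$ then yields the compact forms $h'=\sum_j h_j\,x^j\delta_j$ and $h^{*}=\sum_j h_j\,x^j\overline{\delta_j}$ for $h=\sum_j h_j x^j$. I would then isolate the one sign identity that makes everything fit: whenever $T_{n,j}\neq 0$ the binary support of~$j$ lies in that of~$n$, so $\mathcal{H}_{n-j}=\mathcal{H}_n-\mathcal{H}_j$ and hence $\sigma_{n-j}=\sigma_n\sigma_j$. Applied to the inverse transform this gives $\overline{h^{*}}=(\overline h)'$, the precise sense in which ``$*$'' is the forward map conjugated by the Thue--Morse sign; in particular $f_k^{*}=\sum_i a_{k+i}\,x^i\overline{\delta_i}$.

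With this dictionary in hand, I would expand $[x^N]$ of the right-hand side of Theorem~\ref{theorem2} into a multiple sum over Sierpi\'nski coefficients~$T$ and signs~$\sigma$, and interchange the summation over the convolution index with the summation over~$k$ — a legitimate finite rearrangement, since each coefficient of $f\ast g$ is a finite sum. Re-collecting by the new outer index~$k$, the column mask $x^k\delta_k$ together with the factors $S_k\bar f$ and $S_k\bar g$ should collapse, via the identities above, into the inverse transforms $f_k^{*}$ and $g_k^{*}$ of the shifted series $f_k,g_k$, the surviving diagonal becoming the signed factor $\overline{\delta_k}$; the outer multiplication by $\delta_k$ together with the residual index shift should reassemble the forward transform $(\cdot)'$ and the monomial $x^k$. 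Before trusting the general reindexing I would check the bookkeeping at both ends of the range: the constant term must reproduce $a_0b_0$, and $[x^k]$ of the whole sum must equal the finite expression $\sum_{j\le k}[x^{k-j}]Q_j'$ with $Q_j=\overline{\delta_j}\odot f_j^{*}\odot g_j^{*}$.

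The step I expect to be the main obstacle is exactly this sign bookkeeping, and it is what the word \emph{non-symmetric} in the title points to: the inner factors are assembled by the \emph{signed} (inverse) transform while the outer wrapper is the \emph{unsigned} (forward) transform, so the Thue--Morse signs do not cancel globally but must be partitioned correctly between ``inside'' and ``outside'' at each reindexing. Making this rigorous should reduce to a single combinatorial identity tying $\sigma_{n-j}=\sigma_n\sigma_j$ and the symmetry $T_{n,k}=T_{n,n-k}$ to the mutual inverseness of the two transforms, equivalently $\sum_m \sigma_{N-m}T_{N,m}T_{m,j}=[\,N=j\,]$; once that identity is invoked, the two triple products should match term by term and the formula follows.
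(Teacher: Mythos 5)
Your route is the paper's own — read Theorem~\ref{theorem2} ``vertically,'' interchange the two locally finite summations, and recognize the inner assembly as inverse transforms and the outer wrapping as the forward transform — and your dictionary ($x^k\delta_k=\sum_{n\ge k}T_{n,k}x^n$, $h'=\sum_j h[j]\,x^j\delta_j$, $h^*=\sum_j h[j]\,x^j\overline{\delta_j}$, and $\sigma_{n-j}=\sigma_n\sigma_j$ whenever $T_{n,j}\ne 0$) is correct and cleaner than what the paper makes explicit. The gap is in the collapse step, which as you state it would fail: $x^k\delta_k\odot S_k\bar f$ does \emph{not} collapse into $f_k^*$ with one and the same index $k$. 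The correct identity crosses the two indices: for $j\&k=0$,
\[
\bigl(\bar S_k\, f\bigr)[k+j]\;=\;f_j^*[k],
\qquad
\bigl(S_k\,\bar f\bigr)[k+j]\;=\;\sigma_j\,\sigma_k\,f_j^*[k]\,,
\]
that is, the subscript of the inverse transform is the degree offset~$j$ (the new ``vertical'' index), while the evaluation point is the old summation index~$k$. This crossing is precisely the paper's intermediate claim that the parenthesis of Theorem~\ref{theorem2} equals $\sum_j \delta_j[k]\,f_j^*[k]\,g_j^*[k]\,x^{k+j}$. Note also that with the second form of Theorem~\ref{theorem2} — the one you chose — the identity carries the residual sign $\sigma_j\sigma_k$ (e.g.\ $(S_3\bar f)[7]=-f_4^*[3]$), which is harmless only because it occurs twice, once for $f$ and once for $g$, and squares away; with the first form ($\bar S_k f$) it is exact. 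Once this identity is in place, the rest goes as you predict: the outer factor $\sigma_k\delta_k$ contributes $\delta_k[m-k-j]=T_{m-j,k}$ to the coefficient of degree~$m$, and $\sum_k T_{m-j,k}\,\sigma_k\,\delta_j[k]\,f_j^*[k]\,g_j^*[k]=\bigl(\overline{\delta_j}\odot f_j^*\odot g_j^*\bigr)'[m-j]$, which is Theorem~\ref{theorem3}.

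The second problem is your proposed rigorization: the mutual-inverseness identity $\sum_m\sigma_{N-m}T_{N,m}T_{m,j}=[N=j]$ is not the identity the proof needs. Nowhere in the argument is a transform undone by its inverse — the inverse transforms $f_j^*$, $g_j^*$ sit inside termwise ($\odot$) products with $\overline{\delta_j}$ and with each other, and the forward transform is applied to that product, so no orthogonality or telescoping between $(\cdot)'$ and $(\cdot)^*$ can occur. What actually makes ``the two triple products match term by term'' is the crossed, signed, coefficientwise identity displayed above; it follows from the two facts you already isolated (multiplicativity of $\sigma$ on disjoint binary supports, and the submask structure of $S_k$ and $\delta_k$), so you have all the needed tools, but invoking orthogonality in their place would leave the argument open.
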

\begin{proof}
    In Theorem~\ref{theorem2}, the whole parenthesis is turned into
    \[
        \sum_{j=0}^{\infty}
          \delta_j[k] \, f_j^*[k] \, g_j^*[k] \, x^{k+j}
    \]
where $f[k]$ is the coefficient of term of degree~$k$ in~$f$.

Then we gather all contributions of some degree~$m$ by noticing that some previously defined term of degree~$k+j$ is shifted to degree~$m$ (by performing the remaining multiplication in Theorem~\ref{theorem2}) if and only if~$m-(k+j)$ and $k$ have no common digit~$1$ in their binary encodings:
\[
    (f\ast g)[m] = \sum_{j=0}^{m}
        \sum_{k=0}^{m-j}
        T_{m-j,k}\,
        \sigma_k\,
          \delta_j[k] \, f_j^*[k] \, g_j^*[k]
\]
where $T$ is Sierpi\'nski triangle with $T_{n,k}=1$ if and only if $(n-k)\&k=0$, the symbol $\&$ being the bitwise \texttt{and} operator.

This identity quickly leads to the expected theorem.
\end{proof}


\begin{lemma}\label{slemma}
    The power series $f$ being defined as~$f= a_0 + a_1 x + a_2 x^2 + \dots$, we define also $f_k=a_k + a_{k+1}x + a_{k+2}x^2 + \dots$, and~$f_k^*$ is the power series defined from the binomial modulo 2 inverse transform of the sequence~$a_k, a_{k+1}, a_{k+2},\dots$ (with of course $f^* = f_0^*$). Then,
    \[
        \left( \delta_k \odot f_k^* \right) x^k
        = \delta_k \, x^k \odot f^* S_k
        \,\textrm{.}
    \]
\end{lemma}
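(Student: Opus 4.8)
The plan is to reduce the claimed power-series identity to a single family of scalar identities indexed by the binary masks supporting $\delta_k$, and then to verify each of those by a mask-counting argument. First I would make the support of $\delta_k$ explicit: since $(1-x)^{-1}$ expands as $\prod_{j\ge 0}(1+x^{2^j})$ and $S_k=\prod_{j:\,d_j=1}(1+x^{2^j})$ by Lemma~\ref{lemma1}, the quotient is $\delta_k=\prod_{j:\,d_j=0}(1+x^{2^j})$, so that $\delta_k[m]=1$ precisely when $m\,\&\,k=0$ and $\delta_k[m]=0$ otherwise. Reading the coefficient of $x^{k+m}$ on both sides of the lemma, the left member contributes $\delta_k[m]\,f_k^*[m]$ and the right member $\delta_k[m]\,(f^*S_k)[k+m]$; hence the whole statement is equivalent to the scalar identity
\[
    f_k^*[m] = (f^*\,S_k)[k+m]
    \qquad\text{for every } m \text{ with } m\,\&\,k=0 .
\]

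Next I would expand both sides in terms of binary submasks. Since the coefficients of $S_k$ are exactly the indicators $T_{k,i}$, one has $(f^*S_k)[k+m]=\sum_i T_{k,i}\,f^*[k+m-i]$, where $i$ runs over the submasks of $k$; reindexing by the complementary submask $s=k\setminus i$ and using that $k$ and $m$ share no bit turns this into $\sum_{s\subseteq k} f^*[s+m]$, the addition being a disjoint bitwise \emph{or}. I would then substitute the inverse-transform formula $f^*[t]=\sum_{p\subseteq t}\sigma_{t\setminus p}\,a_p$ on the right, and on the left the definition $f_k^*[m]=\sum_{j\subseteq m}\sigma_{m\setminus j}\,a_{k+j}$, so that both sides become explicit signed sums over binary masks.

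The decisive step is the collapse of the resulting double sum. In each term $f^*[s+m]$ I would split the summation index $p\subseteq s+m$ as $p=p_k\,|\,p_m$ with $p_k\subseteq s\subseteq k$ and $p_m\subseteq m$, which is legitimate because $k$ and $m$ are bit-disjoint; the Thue--Morse sign then factorises as $\sigma_{s\setminus p_k}\sigma_{m\setminus p_m}$. Exchanging the order of summation and letting $t=s\setminus p_k$ range over the submasks of $k\setminus p_k$ isolates the factor $\sum_{t\subseteq k\setminus p_k}(-1)^{\mathcal{H}(t)}$, which by the elementary identity
\[
    \sum_{t\subseteq M}(-1)^{\mathcal{H}(t)} = [\,M=\varnothing\,]
\]
vanishes unless $p_k=k$. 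Only the terms with $p_k=k$ survive, and what remains is exactly $\sum_{p_m\subseteq m}\sigma_{m\setminus p_m}\,a_{k\,|\,p_m}=f_k^*[m]$, which closes the identity.

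I expect the mask bookkeeping to be the only real obstacle: one must reindex carefully (the substitutions $s=k\setminus i$ and $p=p_k\,|\,p_m$ both rely on the disjointness $m\,\&\,k=0$ that comes from restricting to the support of $\delta_k$), and the least routine point is recognising that the inner alternating submask sum annihilates everything except the single term $p_k=k$. That cancellation is precisely the mechanism trading the convolution by $S_k$ against the shift by $k$ hidden inside $f_k^*$.
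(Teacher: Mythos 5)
Your proposal is correct, but it takes a genuinely different route from the paper. The paper proves the lemma by induction on the binary digits of $k$: assuming the identity for $k$, it passes to $k+n$ with $n$ a power of two exceeding $k$, and matches blocks of coefficients via the self-similarity of the Sierpi\'nski triangle (e.g.\ that $\left(1+x^n\right)f_k^*$ and $x^n f_{k+n}^*$ agree in all degrees $m$ with $m \,\textrm{mod}\, 2n \geqslant n$), finally re-applying the induction hypothesis under the more restrictive mask $x^{k+n}\delta_{k+n}$. You instead verify the identity directly, coefficient by coefficient: you restrict to the support $m\,\&\,k=0$ of $\delta_k$, expand $(f^*S_k)[k+m]$ as $\sum_{s\subseteq k} f^*[s+m]$ over submasks, substitute the definition of the inverse transform on both sides, and collapse the resulting double sum with the annihilation identity $\sum_{t\subseteq M}(-1)^{\mathcal{H}(t)}=[\,M=\varnothing\,]$, which kills every term except $p_k=k$. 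All of your steps check out (the two reindexings do rely on the bit-disjointness $m\,\&\,k=0$, which you correctly extract from the support of $\delta_k$, and the factorization $\sigma_{a|b}=\sigma_a\sigma_b$ for disjoint masks follows from $\sigma_n=(-1)^{\mathcal{H}_n}$). Your argument is self-contained and arguably more rigorous than the paper's, whose ``self-similarity'' claims are asserted rather than proved; it also exposes the real mechanism of the lemma, a M\"obius-type cancellation on the subset lattice that trades the convolution by $S_k$ against the shift by $k$ hidden in $f_k^*$. What the paper's induction buys in exchange is structural consistency: it proceeds bit by bit exactly as Lemma~\ref{slemma2} and the recursion of Section~\ref{recursive} do, at the cost of the index bookkeeping your computation makes explicit.
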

\begin{proof}
    We prove this by induction; the statement is obviously true for~$k=0$; then we assume it is true for some nonnegative integer~$k$, and we show that it is still true for~$k+n$ with $n$ being some power of~$2$ greater than~$k$. Let $g=f_k$ and $g_n=f_{k+n}$ in order to focus on a power of~2; we want to study $\left(\delta_{k+n}\odot g_n^*\right) x^{k+n}$ in order to match the left-hand side above.

    Because of the self-similarity in the Sierpi\'nski triangle, we can notice that $\left(1+x^n\right)g^*$ and~$x^n g_n^*$ share the same coefficient of degree~$m$ if $m \,\textrm{mod}\,\, 2n \geqslant n$. For the same reasons, $\delta_k$ and $x^n \delta_{k+n}$ share the same coefficient of degree~$m$ if $m \,\textrm{mod}\,\, 2n \geqslant n$ (all other coefficients of $\delta_{k+n} x^n$ are null). Thus,
    \[
        \begin{array}{lcl}
            \left(\delta_{k+n}\odot f_{k+n}^*\right) x^{k+n}
            &=& \left(\delta_{k+n}\odot g_n^*\right) x^{k+n} \\[4pt]
            &=& \left(x^n \delta_{k+n} \odot x^n g_n^*\right) x^{k} \\[4pt]
            &=& \left( x^n\delta_{k+n}\odot\left(1+x^n\right) g^* \right)x^k \\[4pt]
            &=& x^{k+n}\delta_{k+n}\odot\left(1+x^n\right) x^k f_k^*  \\[4pt]
        \end{array}
    \]
    where, again, the (rather restrictive) mask $x^{k+n}\delta_{k+n}$ allows to replace $x^k f_k^*$ with $f^* S_k$ (by using the initial assumption) \emph{because all coefficients to be added and kept in the multiplication by $\left(1+x^n\right)$ where also taken into account by the initial (less restrictive mask)}. Of course $\left(1+x^n\right)f^* S_k = f^* S_{k+n}$.
\end{proof}

\begin{lemma}\label{slemma2}
    Let $f$ be some power series in the indeterminate~$x$. Then,
    \[
        \left( \delta_k \odot f \right)' x^k
        = \left( \left( \delta_k \odot f\right) x^k \right)' S_k
        \,\textrm{.}
    \]
\end{lemma}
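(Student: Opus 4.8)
The plan is to extract the coefficient of $x^n$ from each side, reduce the equality to a statement purely about binary digits, and then prove that statement by induction on the number of $1$'s in the binary expansion of~$k$. Throughout I write $[\,P\,]$ for the Iverson bracket (equal to~$1$ when~$P$ holds and~$0$ otherwise) and $a\subseteq b$ to mean that every binary digit~$1$ of~$a$ occurs in~$b$, i.e.\ $a\,\&\,(b-a)=0$.

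First I would record three coefficient facts. By Lemma~\ref{lemma1} the coefficient of $x^l$ in $S_k$ is $T_{k,l}=[\,l\subseteq k\,]$. Since $\delta_k=(1-x)^{-1}S_k^{-1}=\prod_{j:\,d_j=0}(1+x^{2^j})$, where $d_j$ is the $j$-th binary digit of~$k$, the coefficient of $x^i$ in~$\delta_k$ is~$1$ exactly when $i\,\&\,k=0$; hence, writing $f=\sum_i a_i x^i$, the masked series $\delta_k\odot f$ has coefficients $c_i=a_i[\,i\,\&\,k=0\,]$. Expanding both sides with the rule $B_n=\sum_j T_{n,j}A_j$ and the convolution by~$S_k$ then gives, for the coefficient of~$x^n$, the value $\sum_i T_{n-k,\,i}\,c_i$ on the left and $\sum_i\bigl(\sum_{l\subseteq k}T_{n-l,\,i+k}\bigr)c_i$ on the right.

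Now I would use the support of~$\delta_k$: only indices~$i$ with $i\,\&\,k=0$ contribute, and for those the sum $i+k$ carries nowhere and equals the bitwise \texttt{or} $i\,\vert\,k$. Since $f$ is arbitrary the coefficients~$c_i$ are free on this support, so matching them one by one reduces the whole lemma, after re-indexing the submask sum by $l\mapsto k-l$ and setting $m=n-k$, to the single binary identity
\[
    [\,i\subseteq m\,]=\sum_{l\subseteq k}\bigl[\,(i\,\vert\,k)\subseteq m+l\,\bigr],
    \qquad i\,\&\,k=0,\ m\ge 0.
\]
(The degenerate case $n<k$ makes both original sides vanish, so it is harmless.)

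Finally I would prove this identity by induction on the number of $1$-bits of~$k$, the case $k=0$ being immediate. The inductive step introduces one new bit $b=2^p$ present in neither~$k$ nor~$i$, splits every submask of $k+b$ as $l$ or $l+b$ with $l\subseteq k$, and rests on the elementary per-bit identity
\[
    \bigl[\,(w\,\vert\,b)\subseteq M\,\bigr]+\bigl[\,(w\,\vert\,b)\subseteq M+b\,\bigr]=[\,w\subseteq M\,],
\]
valid whenever bit~$p$ is absent from~$w$. This is the crux and, I expect, the main obstacle, since it is exactly where the carries of the addition $M\mapsto M+b$ must be controlled: if bit~$p$ of~$M$ is already set then $M+b$ clears it and a carry propagates, so only the first term survives; if bit~$p$ of~$M$ is clear then $M+b$ simply sets it, so only the second term survives; in both cases the total collapses to $[\,w\subseteq M\,]$. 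Applying this with $w=i\,\vert\,k$ and $M=m+l$, summing over $l\subseteq k$, and invoking the induction hypothesis closes the step. As an alternative closer to the proof of Lemma~\ref{slemma}, the same bookkeeping could be phrased through the self-similarity of the Sierpi\'nski triangle, comparing coefficients of degree~$m$ with $m\bmod 2n\ge n$.
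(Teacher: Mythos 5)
Your proposal is correct, but it takes a genuinely different route from the paper's own proof. The paper argues by induction on $k$, appending the highest power-of-two bit $n>k$ and working at the level of whole series: by the self-similarity of the Sierpi\'nski triangle, both $\left(\delta_{k+n}\odot f\right)'$ and $\left(\left(\delta_{k+n}\odot f\right)x^{k+n}\right)'S_{k+n}$ are obtained from their index-$k$ counterparts by one and the same rearrangement rule (written there as $\left(\delta_{k+n}\odot f\right)' = \left(\delta_n\odot\left(\delta_k\odot f\right)'\right)\left(1+x^n\right)$), so the two sides remain equal. You instead extract the coefficient of $x^n$ from both sides, use linearity in the coefficients of~$f$ together with the support of~$\delta_k$ to reduce the lemma to the single bitwise identity $[\,i\subseteq m\,]=\sum_{l\subseteq k}\bigl[\,(i\,\vert\,k)\subseteq m+l\,\bigr]$ for $i\,\&\,k=0$, and prove that identity by induction on the Hamming weight of~$k$ through an explicit carry analysis; I checked the reduction (including the re-indexing $l\mapsto k-l$ and the degenerate case $n<k$) and the per-bit collapse, and both are sound --- the carry case distinction is exactly the combinatorial content that the paper's proof buries under the phrase ``self-similarity''. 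What your route buys is rigor and self-containedness: every step is an elementary, verifiable computation, whereas the paper's claims about how coefficients rearrange are asserted rather than checked; your bitwise identity is also a clean, reusable statement about the interplay of the transform, the masks~$\delta_k$ and multiplication by~$S_k$. What the paper's route buys is brevity and structural parallelism: its induction mirrors the proof of Lemma~\ref{slemma} and directly exhibits the $(1+x^n)$ building rule that later drives the recursive formula of Section~\ref{recursive}.
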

\begin{proof}
    We prove this by induction; the statement is obviously true for~$k=0$; then we assume it is true for some nonnegative integer~$k$, and we show that it is still true for~$k+n$ with $n$ being some power of~$2$ greater than~$k$.

    Because of the self-similarity in the Sierpi\'nski triangle, we can notice that all coefficients of $\left(\delta_{k+n}\odot f\right)'$ in the left-hand part above can be found among the coefficients of~$\left(\delta_{k}\odot f\right)'$ according to the following rule:
    \[
    \left\{
    \begin{array}{l@{\qquad}l}
        \left(\delta_{k+n}\odot f\right)'[m]
            = \left(\delta_{k}\odot f\right)'[m] & \text{ if $m \,\textrm{mod}\,\, 2n < n$; }\\[6pt]
        \left(\delta_{k+n}\odot f\right)'[m]
            = \left(\delta_{k}\odot f\right)'[m-n] & \text{ if $m \,\textrm{mod}\,\, 2n \geqslant n$; }
            \end{array}
    \right.
    \]
which can also be written as $\left(\delta_{k+n}\odot f\right)' = \left(\delta_n\odot \left(\delta_{k}\odot f\right)'\right)\left(1+x^n\right)$.

 The same idea is separately followed for the right-hand part of the identity to be proved: $\left( \left( \delta_{k+n} \odot f\right) x^{k+n} \right)' S_{k+n}$ is also made of coefficients from $\left( \left( \delta_k \odot f\right) x^k \right)' S_k$ according to the very same same rule because of the self-similarity property of the Sierpi\'nski triangle again. Thus we see that both sides of the statement match exactly.
\end{proof}

\begin{theorem}\label{theorem4}
    Let $f$ and $g$ be two power series in the same indeterminate~$x$. Then,
    \[
    f\ast g =
    \sum_{k=0}^{\infty}
          \left(
              \,
              \overline{ \delta_k}\, x^k \,\odot\, f^* S_k \,\odot\, g^* S_k
          \right)'
      \, S_k
      \,\textrm{.}
    \]
\end{theorem}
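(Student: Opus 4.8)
The plan is to convert the summand of Theorem~\ref{theorem3} into the summand of Theorem~\ref{theorem4} term by term, using Lemmas~\ref{slemma} and~\ref{slemma2} as the two engines of the transformation; since both formulas are summations over the same index~$k$, establishing the identity of the individual summands settles the theorem at once.

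First I would apply Lemma~\ref{slemma2} to each summand $\left(\overline{\delta_k}\odot f_k^*\odot g_k^*\right)'x^k$. Because $\overline{\delta_k}=\sigma\odot\delta_k$, the bracketed series equals $\delta_k\odot\left(\sigma\odot f_k^*\odot g_k^*\right)$, which is exactly of the form $\delta_k\odot f$ demanded by that lemma (taking $f=\sigma\odot f_k^*\odot g_k^*$). Lemma~\ref{slemma2} then pushes the factor~$x^k$ inside the binomial modulo~2 transform and converts the trailing~$x^k$ into the Sierpi\'nski polynomial~$S_k$, so that
\[
    \left(\overline{\delta_k}\odot f_k^*\odot g_k^*\right)'x^k
    = \left(\left(\overline{\delta_k}\odot f_k^*\odot g_k^*\right)x^k\right)'S_k\,.
\]
This already produces the outer~$S_k$ and the prime-after-shift structure visible in Theorem~\ref{theorem4}; only the shifted bracket remains to be rewritten.

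Next I would establish the key identity
\[
    \left(\overline{\delta_k}\odot f_k^*\odot g_k^*\right)x^k
    = \overline{\delta_k}\,x^k\odot f^*S_k\odot g^*S_k\,,
\]
which is the triple-product analogue of Lemma~\ref{slemma}. The plan is to compare coefficients of degree~$m$. The left side reads $\overline{\delta_k}[m-k]\,f_k^*[m-k]\,g_k^*[m-k]$ and the right side reads $\overline{\delta_k}[m-k]\,(f^*S_k)[m]\,(g^*S_k)[m]$. Where $\delta_k[m-k]=0$ both sides vanish, since $\overline{\delta_k}[m-k]=\sigma_{m-k}\delta_k[m-k]$ and each $\sigma$-value is $\pm1$. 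Where $\delta_k[m-k]=1$, reading off coefficients from Lemma~\ref{slemma} applied once to~$f$ and once to~$g$ gives $f_k^*[m-k]=(f^*S_k)[m]$ and $g_k^*[m-k]=(g^*S_k)[m]$, so the two products coincide. Thus the common factors $\sigma_{m-k}$ and $\delta_k[m-k]$ act as a shared mask that causes no trouble. Substituting this identity into the bracket obtained above turns the summand into $\left(\overline{\delta_k}\,x^k\odot f^*S_k\odot g^*S_k\right)'S_k$, and summing over~$k$ yields Theorem~\ref{theorem4}.

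I expect the main obstacle to be precisely this key identity, specifically the simultaneous applicability of Lemma~\ref{slemma}: one must check that the single mask $\overline{\delta_k}\,x^k$ is restrictive enough that, wherever it is nonzero, \emph{both} $f_k^*$ and $g_k^*$ are being evaluated at exactly the degrees at which Lemma~\ref{slemma} guarantees agreement with $f^*S_k$ and $g^*S_k$ respectively. Lemma~\ref{slemma} provides this agreement only under the mask $\delta_k\,x^k$, so the argument hinges on the fact that the same mask governs both factors at once. Once this point is verified, the remaining steps are purely formal rearrangements.
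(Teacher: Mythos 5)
Your proposal is correct and takes essentially the same route as the paper: both proofs convert the summand of Theorem~\ref{theorem3} into that of Theorem~\ref{theorem4} using exactly Lemma~\ref{slemma} and Lemma~\ref{slemma2}, only in the opposite order (the paper applies Lemma~\ref{slemma} first to rewrite the masked products, then Lemma~\ref{slemma2} to trade the outer $x^k$ for $S_k$, whereas you do the reverse). Your explicit coefficient-wise verification that Lemma~\ref{slemma} can be applied to both $f$ and $g$ simultaneously under the single shared mask $\overline{\delta_k}\,x^k$ spells out a detail the paper's very terse proof leaves implicit.
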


\begin{proof}
    Theorem~\ref{theorem3} is rewritten with the help of Lemma~\ref{slemma}:
    \[
    f\ast g =
    \sum_{k=0}^{\infty}
          \left(
              \,\displaystyle\frac
               {
              \overline{ \delta_k}\, x^k \,\odot\, f^* S_k \,\odot\, g^* S_k
               }
               {x^k}
          \right)'
      \, x^k
    \]
which quickly leads to the expected statement with the help of Lemma~\ref{slemma2}.
\end{proof}

\section{A recursive formula for the convolution}\label{recursive}

Summations involving Sierpi\'nski's polynomials may be easy to convert into a recursive formula because ``jumping'' from $S_k$ to $S_{k+n}$ (with $n$ being a power of~2 greater than~$k$) is merely achieved by using the relation~$S_{k+n}=(1+x^n)S_k$.

Furthermore, the parenthesis in Theorem~\ref{theorem4} contains terms which still follow the previously studied \emph{interleaved splitting scheme} (see Section~3 in~\cite{baruchel}), meaning that they can be computed from another ones by splitting them into two increasingly-sparse terms. An exact definition of the scheme used below is:
\[
       \left\{
       \begin{array}{lcl}
       u_{\text{\tiny low}}^{\text{\tiny $(n)$}}
         &=& u\odot \delta_n \\[5pt]
       u_{\text{\tiny high}}^{\text{\tiny $(n)$}}
         &=& u\odot x^n \delta_n
       \end{array}\right.
       \qquad\text{with $n$ being some power of~$2$.}
\]

For traversing the recursion tree, an \emph{ad hoc} operator will be used here in order to ``pack'' the summation from Theorem~\ref{theorem4} into the following formula:
\begin{equation}\label{recformula}
f\ast g \,=\, \left(\,\overline{f^*}\odot g^*\right)'
        \,+\, \overline{f^*}
              \underset{\text{\tiny $(1)$}}{\diamond} g^*
\end{equation}
this parametrized operator symbol $\underset{\text{\tiny $(n)$}}{\diamond}$ being recursively defined as:
\begin{equation}\label{recoperator}
\begin{array}{lcl}
    u \underset{\smash{\text{\tiny $(n)$}}}{\diamond} v
     &=& \left(1+x^n\right)
       \left( x^n u_{\text{\tiny low}}^{\text{\tiny $(n)$}} - u_{\text{\tiny high}}^{\text{\tiny $(n)$}}
               \, \odot \,
              x^n v_{\text{\tiny low}}^{\text{\tiny $(n)$}} + v_{\text{\tiny high}}^{\text{\tiny $(n)$}}
       \right)' \\[4pt]
     &&+\, u \underset{\smash{\text{\tiny $(2n)$}}}{\diamond} v \\[4pt]
     &&+\, \left(1+x^n\right)
           \left(
        x^n u_{\text{\tiny low}}^{\text{\tiny $(n)$}} - u_{\text{\tiny high}}^{\text{\tiny $(n)$}} 
            \, \underset{\smash{\text{\tiny $(2n)$}}}{\diamond} \,
              x^n v_{\text{\tiny low}}^{\text{\tiny $(n)$}} + v_{\text{\tiny high}}^{\text{\tiny $(n)$}}
              \right)
\end{array}
\end{equation}
(where the digits in the binary encoding of the variable~$k$ occuring in Theorem~\ref{theorem4} are read from right to left by either choosing a digit~0 or a digit~1, explaining the two recursive calls).

Obiously the previously-defined operator is non-commutative. Building this formula as a non-symmetric one allows to track and preserve the signs from the~$\overline{\delta_k}$ term without using an external mask (which is the case in Theorem~\ref{theorem4}). Because of the properties of the Thue-Morse sequence, subtractions actually embed ``hidden'' additions (since exactly one of both terms occuring in each subtractions previously had its sign flipped).

\section{Conclusion}

The new theorems~\ref{theorem1} and~\ref{theorem2} are equivalent to their previous versions in~\cite{baruchel}, but they now make use of more documented notations. They are therefore claimed to be of a more general interest for later researches. The binomial modulo~2 transform is used a couple of times in the \textit{On-Line Encyclopedia of Integer Sequences}, but it does not seem to have been actually studied in published papers; thus theorems and lemmas proved in Section~\ref{binomial} and~\ref{recursive} may also provide a basis for further investigating the properties of this transform. The conciseness of formulas in Theorems~\ref{theorem3} and~\ref{theorem4} show that ``transposing'' the initial ones suit more the approach being investigated since the previous paper.

Formula~(\ref{recoperator}) in Section~\ref{recursive} ``packs'' the whole computation into a rather simple statement with no need for symbols related to the Sierpi\'nski triangle any longer, allowing to focus henceforth on the binomial modulo~2 transform only. Directly implementing this formula as code or pseudo-code was not considered at this point, because it would certainly not be very efficient as it is; it should be noticed however that while the definition of the binomial modulo 2 transform in Section~\ref{binomial} obviously is in~$O(n^2)$, much better algorithms for computing the whole transform in~$O(n\log n)$ are achievable. Once initially computed, the transform could probably be propagated and partially updated during the recursion process if attempting to efficiently implement the formula.

\vspace{32pt}
\noindent{\small \textbf{Conflict of Interest:} The authors declare that they have no conflict of interest.}

\vspace{8pt}
\noindent{\small The current article is accessible on \texttt{http://export.arxiv.org/pdf/1912.00452}\hspace{2pt}.}

\end{document}